\theoremstyle{plain}
\newtheorem{thm}{Theorem}
\newtheorem{lem}[thm]{Lemma}
\newtheorem{prop}[thm]{Proposition}
\theoremstyle{definition}
\newtheorem{defn}[thm]{Definition}
\theoremstyle{remark}
\newtheorem{rem}[thm]{Remark}
\numberwithin{equation}{section}
\title{REMARKS ON A CATEGORICAL DEFINITION \\ OF DEGENERATION IN TRIANGULATED CATEGORIES}
\author{Alexander Zimmermann} 
\address{
\begin{flushleft}
        \hspace{0.3cm}  D\'epartement de Math\'ematiques et LAMFA (UMR 7352 du CNRS)\\
         \hspace{0.3cm}  Universit\'e de Picardie\\
         \hspace{0.3cm}  33, rue St Leu \\
         \hspace{0.3cm}  80039 Amiens, FRANCE\\
\end{flushleft}
}
\email{Alexander.Zimmermann@u-picardie.fr}
 \thanks{The detailed version of this paper will be submitted
 for publication elsewhere.}
\begin{document}

\maketitle


\begin{abstract}
This work reports on joint research with Manuel Saorin.
For an algebra $A$ over an algebraically closed field $k$ the set of $A$-module structures on $k^d$ forms an affine algebraic variety. The general linear group $Gl_d(k)$ acts on this variety and isomorphism classes correspond to orbits under this action. A module $M$ degenerates to a module $N$ if $N$ belongs to the Zariski closure of the orbit of $M$. Yoshino gave a scheme-theoretic characterisation, and Saorin and Zimmermann generalise this concept to general triangulated categories. We show that this concept has an interpretation in terms of distinguished triangles, analogous to the Riedtmann-Zwara characterisation for modules. In this manuscript we report on these results and study the behaviour of this degeneration concept under functors between triangulated categories.




\end{abstract}

\section{Introduction}

Already very early in representation theory of algebras a geometric
interpretation of representations of an algebra was given, cf e.g. work of
Gabriel~\cite{Gabriel}. For an algebraically closed field $k$, a
finite-dimensional $k$-algebra $A$ and some integer $d>0$ the set of $A$-module structures on $k^d$ forms an affine algebraic variety $mod(A,d)$. The general linear group $Gl_d(k)$ acts on this variety and two $A$-module structures on $k^d$ are isomorphic if and only if they belong to the same orbit. One says that an
$A$-module $M$ degenerates to the module $N$ if $N$ belongs to the Zariski closure of the orbit of $M$. Much work was done to explain the geometric structure of the orbit closures. Riedtmann \cite{Riedtmann} and Zwara \cite{Zwara} prove that $M$
degenerates to $N$ if and only if there is an $A$-module $Z$ and an embedding of
$Z$ into $M\oplus Z$ such that $N$ is isomorphic to the cokernel of this
embedding. We refer to Section~\ref{Sectionclassical} for more details on this
part of the theory.

Yoshino studies in \cite{YoshinoCM, YoshinoModules, Yoshinostable}
degeneration for more general algebras, including maximal Cohen-Macaulay
modules over a local Gorenstein $k$-algebra, and for this purpose he gave a
scheme-theoretic definition of this concept. Using this concept
Yoshino studies stable categories
of maximal Cohen-Macaulay modules over a local Gorenstein algebra.
We refer to Section~\ref{Yoshinosconceptsect} for more details.

Yoshino's concept is then suitable for general triangulated categories.
In joint work with Saorin \cite{SaorinZim} we define a degeneration
concept for general triangulated $k$-categories with splitting idempotents.
We then show that this concept implies in a very general setting that if an
object $M$ degenerates to an object $N$, then there is an object $Z$ and a
distinguished triangle
$Z\stackrel{v\choose u}{\rightarrow} Z\oplus M\rightarrow N\rightarrow Z[1]$
in the triangulated category with nilpotent endomorphism $v$ of $Z$.
We then write $M\leq_\Delta N$.
For algebraic triangulated categories, and an additional technical
hypothesis, we prove the converse. We study in the present paper the
question what happens if two objects $M$ and $N$ belonging to a
triangulated category ${\mathcal T}_1$
such that ${\mathcal T}_1$ is a full triangulated
subcategory of a triangulated category ${\mathcal T}_2$.
Under the hypotheses on ${\mathcal T}_2$ which
we need for the converse of the main theorem of \cite{SaorinZim}
as mentioned above, we show that then the degeneration concepts coincide.

We finally mention that our degeneration concept applies to the bounded derived
category of a finite dimensional algebra, and to the stable category of a
selfinjective algebra. For the bounded derived category over an algebra
Jensen, Su and Zimmermann gave an alternative definition in \cite{JSZdegen}.
Also in \cite{JSZdegen} we showed that degeneration there is equivalent
to the existence of a distinguished triangle as above. However, this
concept is very closely linked to the specific situation of derived
categories of bounded complexes over a finite dimensional algebra.
Moreover, no clear relation to degeneration of the homology can be seen.
In a subsequent approach Jensen, Madsen and Su \cite{JMS} used
$A_\infty$ algebras to define a degeneration by means of the
homology of a complex. Again, this is not done for general triangulated
categories.

In the classical theory degeneration of modules provides a partial
order on the isomorphism classes of objects. In \cite{JSZpartord}
Jensen, Su and Zimmermann study when the degeneration given by the
existence of a distinguished triangle
$Z\stackrel{v\choose u}{\rightarrow} Z\oplus M\rightarrow N\rightarrow Z[1]$
gives a partial order. This happens to be the case when some finiteness
conditions are assumed, in particular morphism spaces in the triangulated
category should be $k$-modules of finite length for all objects in the
triangulated category. Moreover, for two objects $X,Y$ we ask that we may find a
shift $n_{X,Y}$ such that there is no non-zero morphism from $X$ to $Y[n_{X,Y}]$.

\medskip

{\bf Acknowledgement :} I wish to thank the organisers of the
Symposion on Ring Theory and Representation theory 47,
and in particular Hideto Asashiba for the kind invitation
to Osaka, for giving me the opportunity to present my work and for the great hospitality during my visit.

\section{Classical degeneration concepts}
\label{Sectionclassical}

Degeneration between modules over a fixed algebra is a relatively classical subject in representation theory of finite dimensional algebras, and was used in many different ways.

Let $k$ be an algebraically closed field and let $A$ be a finite dimensional $k$-algebra. Then an $A$-module of $k$-dimension $d$ is an algebra homomorphism
$A\stackrel{\varphi}{\rightarrow} End_k(k^d)$. Hence, if $a_1,\cdots,a_m$ are
algebra generators of $A$, then for each $i\in\{1,\cdots,m\}$ each of the
$\varphi(a_i)=:M_i$ is a square matrix of size $d$. Moreover, $A$ is finitely
presented, in the way that there is a finite set
$\rho_1(X_1,\cdots,X_m),\cdots,\rho_s(X_1,\cdots,X_m)$ of relations such that if
$k\langle X_1,\dots,X_m\rangle$ denotes the free algebra in $m$ variables
$X_1,\cdots,X_m$, then as an algebra we get $$A\simeq k\langle
X_1,\cdots,X_m\rangle/(\rho_1,\cdots,\rho_s).$$
The points of the affine algebraic variety $mod(A,d)$ defined by the $m\cdot d^2$ variables given by the coefficients of the matrices $M_1,\cdots,M_m$ modulo the
relations given by the polynomial equations $\rho_1,\cdots,\rho_s$ parameterise
$A$-module structures on $k^d$. Two modules $N_1$ and $N_2$ corresponding to the
points $n_1,n_2$ of $mod(A,d)$ are isomorphic if and only if the corresponding
matrices $M_1(n_1),\cdots,M_m(n_1)$ are simultaneously conjugate with the matrices $M_1(n_2),\cdots,M_m(n_2)$. Otherwise said, $G:=GL_d(k)$ acts on $mod(A,d)$ by
matrix conjugation and the points $n_1$ and $n_2$ correspond to isomorphic modules if and only if $n_1$ and $n_2$ belong to the same orbit $G\cdot n_1=G\cdot n_2$
under this action. In general orbits are not Zariski closed, and we denote by
$\overline{G\cdot n}$ the Zariski closure of the orbit $G\cdot n$. Now, we say
that {\em $N_1$ degenerates to $N_2$} if and only if $n_2\in \overline{G\cdot
n_1}$. In this case we note $N_1\leq_{deg}N_2$.

An algebraic classification of degenerations was subject of intensive research.
It is relatively easy to see that $N_2\simeq N/N_1$ implies $N\leq_{deg}N_1\oplus N_2$. Moreover, $N_1\leq_{deg}N_2$ implies $N_3\oplus N_1\leq_{deg}N_3\oplus N_2$
for all $A$-modules $N_1,N_2,N_3$. The converse is not true, as may be shown by an example due to Jon Carlson (cf \cite[\S{} 3.1]{Riedtmann}); another example was given with different methods by Yoshino~\cite[Proposition 3.3]{Yoshinostable}.
Further, if $N_1\leq_{deg}N_2$, then $$\dim_k(Hom_A(X,N_1))\leq \dim_k(Hom_A(X,N_2))$$
and $$\dim_k(Hom_A(N_1,X))\leq \dim_k(Hom_A(N_2,X))$$
for all $X$. This property implies actually that $\leq_{deg}$ is a partial order, as was shown by Auslander~\cite{Auslander}. An independent proof was later given by Bongartz~\cite{Bongartz}, and an adaption of this proof was used in \cite{JSZpartord} to show that $\leq_\Delta$ is a partial order under some reasonable hypotheses.
Riedtmann showed in \cite{Riedtmann} that if there is an $A$-module $Z$ and a short exact sequence
$$0\rightarrow Z\rightarrow Z\oplus N_1\rightarrow N_2\rightarrow 0$$ then $N_1\leq_{deg}N_2$, and Zwara showed in \cite{Zwara} the converse in this generality. The above relations on the dimension of $Hom$-spaces is an easy consequence, though it was proved earlier by different methods.

\section{On Yoshino's degeneration concept}
\label{Yoshinosconceptsect}

The fact that we only deal with finite dimensional algebras in Section~\ref{Sectionclassical} is in some sense unsatisfying.
In order to be able to cover a greater generality, Yoshino changed
the classical degeneration $\leq_{deg}$ to a scheme theoretic concept
which is well-suited for us. We explain Yoshino's results here.

Let $k$ be a field and let $A$ be a $k$-algebra. Yoshino developed in a
series of papers a degeneration concept which is well-suited for the
purpose of commutative algebra. By the symbol $(V,tV,k)$ we denote a
discrete valuation ring $V$ with
radical $tV$ and residue field $k$.
An algebra which is a discrete valuation ring is a
discrete valuation $k$-algebra.

\begin{defn} (Yoshino \cite{YoshinoModules})
Let $A$ be a $k$-algebra and let $M$ and $N$ be two finitely generated $A$-modules.
We say {\em $M$ degenerates to $N$ along a discrete valuation ring}, and we write in this case $M\leq_{dvr}N$, if there
is a discrete valuation $k$-algebra $(V,tV,k)$ and an
$A\otimes_kV$-module $Q$, which is
\begin{itemize}
\item flat as $V$-module,
\item such that $M\otimes_kV[\frac{1}{t}]\simeq Q\otimes_VV[\frac{1}{t}]$ as $A\otimes_kV[\frac{1}{t}]$-modules and
\item such that $N\simeq Q/tQ$ as $A$-modules.
\end{itemize}
\end{defn}

The interpretation of this notion is that there is an affine line,
presented by $V$, and a point $Q$ that moves along $V$. The algebra
$V$ is a discrete valuation algebra since we are only interested in
the neighbourhood of the parameter $t=0$. Now, at the value $t=0$ the
moving point $Q$ becomes $Q/tQ$, which is assumed to be isomorphic to
$N$, and generically, outside $t=0$, the moving point looks like $M$.
This last fact is expressed by the condition
$M\otimes_kV[\frac{1}{t}]\simeq Q\otimes_VV[\frac{1}{t}]$.

Of course, Yoshino's concept $M\leq_{dvr} N$ immediately
generalises to
the stable category. The only thing to do is to replace the
isomorphisms
in the module category by isomorphisms in the stable categories.
Yoshino formulated this concept for stable categories of maximal
Cohen-Macaulay modules over local Gorenstein rings.
A local commutative ring $A$ with residue field $k$ is a Gorenstein
ring if $A$ is Noetherian with finite injective dimension.
In this case an $A$-module $M$ is Cohen-Macaulay if $Ext^i_A(M,A)=0$
for all $i>0$.
It is well-known that the stable category of maximal Cohen-Macaulay
modules over a local Gorenstein $k$-algebra is triangulated.

\begin{defn} (Yoshino) \cite{Yoshinostable}\label{yoshinogeostabledefinition}
Let $k$ be a field, and let $(A,{\mathfrak m},k)$ be a local
Gorenstein $k$-algebra and let $M$ and $N$ be two $A$-modules.
We say {\em $M$ stably degenerates to $N$ along a discrete valuation ring }
if there is a discrete valuation $k$-algebra $(V,tV,k)$ and a maximal
Cohen-Macaulay $A\otimes_kV$-module $Q$, such that
\begin{itemize}
\item  $M\otimes_kV[\frac{1}{t}]\simeq Q\otimes_VV[\frac{1}{t}]$ in the
stable category of maximal Cohen-Macaulay $A\otimes_kV[\frac{1}{t}]$
modules and
\item  $N\simeq Q/tQ$ in the stable category of maximal
Cohen-Macaulay $A$-modules.
\end{itemize}
In this case we write $M\leq_{stdvr}N$.
\end{defn}

Now, the most striking fact is that this concept implies, and is in some
cases actually equivalent to an analogue of Riedtmann-Zwara's
characterisation in terms of short exact sequences.

\begin{defn}
\begin{itemize}
\item
Let $\mathcal A$ be an abelian category. We say that an object
$M$ degenerates to an object $N$ if there is an object $Z$ and a short exact sequence $$0\rightarrow Z\stackrel{v\choose u}{\rightarrow}Z\oplus M\rightarrow N\rightarrow 0$$
with a nilpotent endomorphism $v$ of $Z$. We write $M\leq_{RZ}N$ in this case.
\item
Let $\mathcal T$ be a triangulated category with suspension functor denoted by $T$. We say that an object
$M$ degenerates to an object $N$ if there is an object $Z$ and a distinguished triangle $$ Z\stackrel{v\choose u}{\rightarrow}Z\oplus M\rightarrow N\rightarrow Z[1]$$
with a nilpotent endomorphism $v$ of $Z$. We write $M\leq_\Delta N$ in this case
\end{itemize}
\end{defn}

Riedtmann and Zwara considered this degeneration for modules $M$ and $N$ over a finite dimensional algebras $A$ over a field $k$. In this case Fitting's lemma implies that the hypothesis on $v$ to be nilpotent is not necessary, and actually these authors do not assume that $v$ is nilpotent. Up to my knowledge the importance of this nilpotence hypothesis was first observed by Yoshino \cite{YoshinoModules}.

Yoshino gave as a main theorem of \cite{YoshinoModules,Yoshinostable} the following result. Recall that the stable category of maximal Cohen-Macaulay modules over a local Gorenstein $k$-algebra is triangulated. In particular $\leq_{stdvr}$ and $\leq_\Delta$ are both defined for this category.

\begin{thm}\label{yoshinostheorem}
\begin{itemize}\item\cite{YoshinoModules}
Let $k$ be a field and let $A$ be a $k$-algebra. Let $M$ and $N$ be finitely generated $A$-modules. Then $$\left(M\leq_{dvr}N\right)\Leftrightarrow \left(M\leq_{RZ}N\right).$$
\item\cite{Yoshinostable}
Let $k$ be a field, and let $(R,{\mathfrak m},k)$ be a local Gorenstein $k$-algebra.
Let $\mathcal T$ be the stable category of maximal Cohen Macaulay $R$-modules and let $M$ and $N$ be two objects of $\mathcal T$. Then $$\left(\exists_{m,n\in{\mathbb N}}R^m\oplus M\leq_{dvr}R^n\oplus N\right)\Rightarrow \left(M\leq_{\Delta}N\right)\Rightarrow \left(M\leq_{stdvr} N\right).$$
Moreover, these three conditions are equivalent if $A$ is artinian.
\end{itemize}
\end{thm}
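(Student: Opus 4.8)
The plan is to treat the two parts in turn, reducing the second (stable) statement to the first (module-theoretic) one together with the Frobenius exact structure on maximal Cohen--Macaulay modules.

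For the first part I would begin with the constructive implication $M\leq_{RZ}N\Rightarrow M\leq_{dvr}N$. Starting from a short exact sequence $0\to Z\stackrel{v\choose u}{\rightarrow}Z\oplus M\to N\to 0$ with $v$ nilpotent, I would set $V=k[[t]]$ and define the $A\otimes_kV$-module $Q$ as the cokernel of the map ${v-t\cdot\mathrm{id}_Z\choose u}\colon Z\otimes_kV\to (Z\oplus M)\otimes_kV$. Reducing this presentation modulo $t$ recovers $\binom{v}{u}$, so $Q/tQ\simeq N$; inverting $t$ makes the first component $v-t$ invertible, since $v$ is nilpotent and $t$ is a unit in $V[\frac{1}{t}]$, whence $Q\otimes_VV[\frac{1}{t}]\simeq M\otimes_kV[\frac{1}{t}]$; and flatness of $Q$ over $V$ follows from $\mathrm{Tor}_1^V(Q,k)=\ker\binom{v}{u}=0$, computed from the defining free presentation. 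Thus the nilpotence of $v$ enters precisely where one needs to invert $v-t$.

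The converse $M\leq_{dvr}N\Rightarrow M\leq_{RZ}N$ is the hard direction and is the main obstacle; it is the generalisation, due to Yoshino, of Zwara's theorem from finite-dimensional algebras to arbitrary $k$-algebras. Here I am handed $V$ and a $V$-flat $Q$ with $Q/tQ\simeq N$ and $Q\otimes_VV[\frac{1}{t}]\simeq M\otimes_kV[\frac{1}{t}]$. I would regard $Q$ and $\widehat{M}:=M\otimes_kV$ as two full $V$-lattices inside the common generic module $W:=M\otimes_kV[\frac{1}{t}]$ and, after rescaling $Q$ by a power of $t$ (which alters neither $Q/tQ$ up to isomorphism nor the generic fibre), compare them via multiplication by $t$. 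The object $Z$ and its nilpotent endomorphism must be manufactured from the finite-length $V$-modules measuring the discrepancy of these lattices, for instance from quotients such as $Q/t^nQ$, on which multiplication by $t$ is automatically nilpotent. The difficulty is to assemble this data into a single injection $\binom{v}{u}$ whose cokernel is exactly $N$ and whose $v$ is nilpotent, and to check that the resulting $Z$ is finitely generated over $A$, which uses that $V/t^nV$ is finite dimensional over $k$. Controlling this bookkeeping, rather than any single homological computation, is where the real work lies.

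For the second part I would deduce everything from the first by exploiting that $\mathrm{MCM}(R)$ is a Frobenius exact category whose stable category is $\mathcal T$, in which the free modules $R^m$ become zero objects. Given $R^m\oplus M\leq_{dvr}R^n\oplus N$, the first part produces a short exact sequence of $R$-modules with nilpotent $v$; after replacing its terms by Cohen--Macaulay approximations so that it becomes a conflation of maximal Cohen--Macaulay modules, the induced distinguished triangle in $\mathcal T$ has its free summands killed, yielding $Z\stackrel{v\choose u}{\rightarrow}Z\oplus M\to N\to Z[1]$ with $v$ still nilpotent, that is $M\leq_\Delta N$. Conversely, given such a triangle I would realise it by an honest conflation of maximal Cohen--Macaulay modules and feed it into the constructive direction above, now over $R\otimes_kV$, verifying that $Q$ is maximal Cohen--Macaulay and that the two defining isomorphisms hold in the appropriate stable categories, which gives $M\leq_{stdvr}N$. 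The equivalence of all three conditions when $R$ is artinian would then follow because every $R$-module is maximal Cohen--Macaulay, so the stable and module-theoretic pictures coincide up to free summands and Fitting's lemma renders the nilpotence hypotheses automatic; closing this loop back to a $\leq_{dvr}$-statement is the delicate point of the artinian case.
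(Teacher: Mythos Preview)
The paper does not prove this theorem at all: it is stated with attribution to Yoshino's papers \cite{YoshinoModules,Yoshinostable} and no argument is given beyond the sentence ``Yoshino gave as a main theorem of \cite{YoshinoModules,Yoshinostable} the following result.'' So there is nothing to compare your proposal to within this manuscript.

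Assessing your sketch on its own merits: the constructive direction $M\leq_{RZ}N\Rightarrow M\leq_{dvr}N$ is essentially Yoshino's original argument, and your description (take $V=k[[t]]$, set $Q=\mathrm{coker}\binom{v-t}{u}$, check the three conditions) is correct and complete. For the converse, however, what you have written is an outline of intentions rather than a proof. Saying that $Z$ ``must be manufactured from the finite-length $V$-modules measuring the discrepancy of these lattices'' identifies the right raw material but does not produce the required injection $\binom{v}{u}\colon Z\hookrightarrow Z\oplus M$ with cokernel $N$; Yoshino's actual argument is a careful construction from the filtration $t^n\widehat{M}\subseteq Q\subseteq\widehat{M}$ and the induced short exact sequences, and the bookkeeping you flag as ``the real work'' is precisely the content you have not supplied.

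The stable part has a similar status. The broad strategy (use the Frobenius structure on $\mathrm{MCM}(R)$, lift and descend between conflations and triangles, kill free summands) is the right one, but the step ``after replacing its terms by Cohen--Macaulay approximations so that it becomes a conflation'' hides a nontrivial verification that the nilpotence of $v$ and the shape $Z\oplus M$ survive the approximation, and likewise the passage from a triangle back to an honest conflation with the correct middle term needs an argument. Your closing remark about the artinian case is also too quick: the implication $M\leq_{stdvr}N\Rightarrow(\exists\,m,n)\,R^m\oplus M\leq_{dvr}R^n\oplus N$ is exactly where artinianity is used in Yoshino's work, and it does not reduce to Fitting's lemma alone.
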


The implications may be strict in general. Yoshino gave explicit examples for the first implication.

\section{The categorical degeneration}

We shall now give a generalisation of Yoshino's degeneration concept Definition~\ref{yoshinogeostabledefinition} for the stable category of maximal Cohen-Macaulay modules.

\begin{defn}\label{degendatadef}
Let $k$ be a commutative ring and let ${\mathcal C}_k^\circ$ be a
$k$-linear triangulated category with split idempotents.
A {\em degeneration data} for ${\mathcal C}_k^\circ$ is given by
\begin{itemize}
\item a triangulated category ${\mathcal C}_k$ with split
idempotents and a fully faithful
embedding ${\mathcal C}_k^\circ\rightarrow{\mathcal C}_k$,
\item
a triangulated category   ${\mathcal C}_V$ with split idempotents
and a full triangulated subcategory ${\mathcal C}_V^\circ$,
\item  triangulated functors
$\uparrow_k^V:{\mathcal C}_k\rightarrow {\mathcal C}_V$
and  $\Phi:{\mathcal C}_V^\circ\rightarrow {\mathcal C}_k$,
such that
$({\mathcal C}_k^\circ)\uparrow_k^V\subseteq {\mathcal C}_V^\circ$,
when
we view ${\mathcal C}_k^\circ$ as a full subcategory of ${\mathcal
C}_k$,
\item  a natural transformation
$\text{id}_{{\mathcal C}_V}\stackrel{t}{\rightarrow} \text{id}_{{\mathcal C}_V}$
of triangulated functors.
\end{itemize}
These triangulated categories and functors should satisfy the following axioms:
\begin{enumerate}
\item \label{4}
For each object $M$ of ${\mathcal C}_k^\circ$ the morphism
$\Phi(M\uparrow_k^V)\stackrel{\Phi(t_{M\uparrow_k^V})}{\rightarrow}\Phi(M\uparrow_k^V)$
is a split monomorphism in ${\mathcal C}_k$.
\item \label{6}
For all objects $M$ of ${\mathcal C}_k^\circ$ we get
$\Phi(\text{cone}(t_{M\uparrow_k^V}))\simeq M$.
\end{enumerate}
\end{defn}

All throughout the paper, whenever we have a degeneration data for
$\mathcal{C}_k^\circ$ as above, we will see $\mathcal{C}_k^\circ$ as
a full subcategory of $\mathcal{C}_k$.

\begin{defn}\label{degendef}
Given two objects $M$ and $N$ of ${\mathcal C}_k^\circ$ we say that
{\em $M$ degenerates to $N$ in the categorical sense} if there is a
degeneration data for ${\mathcal C}_k^\circ$ and an object $Q$ of
${\mathcal C}_V^\circ$ such that
$$p(Q)\simeq p(M\uparrow_k^V)\mbox{ in ${\mathcal C}_V^\circ[t^{-1}]$
and }\Phi(\text{cone}(t_Q))\simeq N,$$ where $p:{\mathcal
C}_V^\circ\rightarrow{\mathcal C}_V^\circ [t^{-1}]$ is the
canonical functor. In this case we write $M\leq_{cdeg}N$.
\end{defn}

\begin{rem}
The functor $\uparrow_k^V$ models $V\otimes_k-$ from Yoshino's attempt.
The functor
$\Phi$ models the forgetful functor which is the identity on objects,
i.e. an $A\otimes_kV$-module $M$ is considered as an $A$-module only.
Of course, in the classical situation considered by Yoshino
$M$ is not finitely generated anymore. This is the reason why we
need to consider the categories ${\mathcal C}_k^\circ$ inside
${\mathcal C}_k$, and ${\mathcal C}_V^\circ$ inside ${\mathcal C}_V$.
\end{rem}

The concept $\leq_{cdvr}$ is appropriate, as we shall see in the following result.
It gives under certain conditions an equivalence between the geometric, or scheme-theoretical, degeneration $\leq_{cdeg}$ and the algebraic notion
of degeneration $\leq_\Delta$.

\begin{thm} (Saorin and Zimmermann \cite{SaorinZim})
Let $k$ be a commutative ring.
\label{main}
\begin{enumerate}
\item Let ${\mathcal C}_k^\circ$ be a triangulated $k$-category with split
idempotents and let $M$ and $N$ be two objects of ${\mathcal
C}_k^\circ$. Then $M\leq_{cdeg}N$ implies that $M\leq_\Delta N$.
\item
Suppose that $k$ is a field.
Let $\mathcal{C}_k^0$ be the category of compact objects of an
algebraic compactly generated triangulated $k$-category. If $M\leq_\Delta N$,
then $M\leq_{cdeg}N$.
\end{enumerate}
\end{thm}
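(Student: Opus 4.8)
\emph{Part (1).} Since $t$ is a natural transformation of the identity functor, $t_Y\circ g=g\circ t_X$ for every morphism $g\colon X\to Y$, so the set of powers of $t$ admits a calculus of fractions and morphisms in $\mathcal{C}_V^\circ[t^{-1}]$ are computed by roofs. The isomorphism $p(Q)\simeq p(M\uparrow_k^V)$ is therefore represented by a genuine morphism $f$ of $\mathcal{C}_V^\circ$ between $M\uparrow_k^V$ and $Q$ with $p(f)$ invertible. Completing $f$ to a triangle with third term $C$ and applying $p$ gives $p(C)=0$; read through the calculus of fractions this says that $\mathrm{id}_C$ is annihilated by a power of $t$, i.e. $t_C$ is nilpotent. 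I set $Z:=\Phi(C)$ and $v:=\Phi(t_C)$, a nilpotent endomorphism. Naturality of $t$ turns the triangle on $f$ into a morphism of triangles whose vertical maps are the components of $t$; the $3\times 3$-lemma (an iterated octahedron) then relates the three cones $\mathrm{cone}(t_{M\uparrow_k^V})$, $\mathrm{cone}(t_Q)$, $\mathrm{cone}(t_C)$ by a distinguished triangle. Applying $\Phi$ and substituting $\Phi(\mathrm{cone}(t_{M\uparrow_k^V}))\simeq M$ from Axiom~(\ref{6}), $\Phi(\mathrm{cone}(t_Q))\simeq N$, and $\Phi(\mathrm{cone}(t_C))\simeq\mathrm{cone}(v)$ yields a triangle $M\to N\to\mathrm{cone}(v)\to M[1]$.

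To reach the shape defining $\leq_\Delta$, I would run one more octahedron on the composite $N\to\mathrm{cone}(v)\to Z[1]$ whose second arrow is the connecting map of $Z\xrightarrow{v}Z\to\mathrm{cone}(v)\to Z[1]$. This produces an object $E$ lying both in a triangle $Z\to E\to N\to Z[1]$ and in an extension $M\to E\to Z\to M[1]$. The split-monomorphism hypothesis, Axiom~(\ref{4}), is what forces the connecting morphism $Z\to M[1]$ of this extension to vanish, so that $E\simeq Z\oplus M$ and the first map of the triangle acquires the matrix form $\binom{v}{u}$; this gives $Z\xrightarrow{\binom{v}{u}}Z\oplus M\to N\to Z[1]$ with $v$ nilpotent, as required. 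I expect this final splitting, together with the bookkeeping identifying the map as $\binom{v}{u}$, to be the only non-formal point of part~(1).

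\emph{Part (2).} Conversely, suppose $Z\xrightarrow{\binom{v}{u}}Z\oplus M\to N\to Z[1]$ is given with $v$ nilpotent. Algebraicity lets me fix a differential graded model, writing the ambient compactly generated category as $D(\mathcal A)$ for a small DG category $\mathcal A$ over $k$, with $\mathcal{C}_k^\circ$ its compact objects. I take $V:=k[t]$, set $\mathcal{C}_k:=D(\mathcal A)$ and $\mathcal{C}_V:=D(\mathcal A\otimes_k V)$ with $\mathcal{C}_V^\circ$ the perfect objects, let $\uparrow_k^V=V\otimes_k-$ and $\Phi$ be restriction of scalars along $\mathcal A\to\mathcal A\otimes_k V$, and let $t$ be multiplication by the central element $t$. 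Because $k$ is a field, $V$ is $k$-free and multiplication by $t$ on $V\otimes_k M$ is, after restriction along $\Phi$, a split monomorphism with cone $(V/tV)\otimes_k M\simeq M$; this checks Axioms~(\ref{4}) and (\ref{6}).

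The crux is the object $Q$. Lifting $Z,M,v,u$ to the DG level, I define $Q$ to be the cone of $$\binom{t\cdot\mathrm{id}-v}{u}\colon V\otimes_k Z\longrightarrow(V\otimes_k Z)\oplus(V\otimes_k M),$$ with $v,u$ extended $V$-linearly. Reducing modulo $t$, that is applying $\Phi\,\mathrm{cone}(t_{-})$, replaces $t\cdot\mathrm{id}-v$ by $-v$ and returns $\mathrm{cone}\binom{-v}{u}\simeq N$, so $\Phi(\mathrm{cone}(t_Q))\simeq N$. After inverting $t$ the nilpotence of $v$ makes $t\cdot\mathrm{id}-v=t(\mathrm{id}-t^{-1}v)$ invertible, whence the displayed map is a split monomorphism with cokernel $V[t^{-1}]\otimes_k M$ and $p(Q)\simeq p(M\uparrow_k^V)$ in $\mathcal{C}_V^\circ[t^{-1}]$; thus $Q$ witnesses $M\leq_{cdeg}N$. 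The genuine difficulty here is not these computations but the passage to the DG model: one must realise $v,u$ and the map $t\cdot\mathrm{id}-v$ as honest morphisms so that $Q$ is a well-defined object of $\mathcal{C}_V^\circ$, and one must ensure that homotopy-level nilpotence of $v$ suffices to make $t\cdot\mathrm{id}-v$ invertible after inverting $t$. This is exactly where algebraicity, compact generation, and the additional technical hypothesis mentioned in the introduction are needed.
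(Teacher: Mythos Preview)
Your outline for Part~(1) captures the octahedral manoeuvres that produce a triangle of the right shape, and your use of the $3\times 3$-lemma together with the calculus of fractions on the central element $t$ is essentially how the argument proceeds. However, you have missed one of the two difficulties the paper singles out explicitly: the object $Z=\Phi(C)$ you construct lies a priori only in $\mathcal{C}_k$, since $\Phi\colon\mathcal{C}_V^\circ\to\mathcal{C}_k$, whereas the relation $M\leq_\Delta N$ must hold in $\mathcal{C}_k^\circ$. Nothing in your argument forces $Z$ into the smaller category, and this cannot be finessed away: the paper states that showing $Z\in\mathcal{C}_k^\circ$ is a ``main difficulty'' requiring a separate result of May. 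Without it your triangle lives in $\mathcal{C}_k$ but not in $\mathcal{C}_k^\circ$, and the conclusion does not follow. You also flag the splitting $E\simeq Z\oplus M$ via Axiom~(\ref{4}) as ``the only non-formal point'' but do not carry it out; note that Axiom~(\ref{4}) concerns $\Phi(t_{M\uparrow_k^V})$, and the passage from that split monomorphism to the vanishing of your connecting map $Z\to M[1]$ is not immediate---tracing it through is part of the ``tricky applications of octahedral axioms'' the paper alludes to, so calling everything else ``formal'' understates what remains.

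Your Part~(2) is in the right spirit and matches the paper's indication that Keller's theorem supplies the DG model. The construction of $Q$ as the cone of $\binom{t\cdot\mathrm{id}-v}{u}$ is the natural candidate, your verification that $p(Q)\simeq p(M\uparrow_k^V)$ via nilpotence of $v$ is correct, and the identification $\Phi(\mathrm{cone}(t_Q))\simeq N$ is plausible but, as you acknowledge, needs the DG-level lift of $u,v$ to make ``reducing modulo $t$'' rigorous.
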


We observe that this is indeed a generalisation of Yoshino's Theorem~\ref{yoshinostheorem}. Moreover, both parts of the theorem
are valid for the bounded derived category of $A$-modules for $A$ being a finite
dimensional $k$-algebra.

The reason why we need the additional hypotheses for the second part of the theorem is to be able to apply a result due to Keller~\cite{Kellerddc,Kelleralgebraic}. This result implies that then ${\mathcal C}^\circ_k$ is the subcategory of compact objects of the derived category of a differential graded $k$-category. For the first part a main difficulty is first to construct the object $Z$ of ${\mathcal C}_k$. This is done by tricky applications of octahedral axioms. Another main difficulty is to show that the object $Z$ is actually in ${\mathcal Z}^\circ_k$, and not only in ${\mathcal C}_k$. This is shown using a result due to May~(cf e.g.\cite[Lemma 3.4.5]{reptheobuch} or \cite{May}).

\section{Categorial degeneration and triangle functors}

We see immediately that if the triangulated category $\mathcal C$ is equivalent to the triangulated category $\mathcal D$, given by some functor $F$, then $$\left[M\leq_{cdeg}N\Leftrightarrow F(M)\leq_{cdeg}F(N)\right]\mbox{ and }\left[M\leq_{\Delta}N\Leftrightarrow F(M)\leq_{\Delta}F(N)\right].$$
However, if $F$ is not an equivalence the situation is much less clear.

\subsection{The Zwara-like degeneration defined by triangles}

Consider the degeneration $\leq_\Delta$ given by distinguished triangles. Then, it is not difficult to show that this degeneration concept is well-behaved with respect to
the image under a triangle functor.

\begin{lem}\label{Deltadegenunderimage}
Let $\mathcal C$ and $\mathcal D$ be triangulated categories and let
$$F:{\mathcal C}\longrightarrow{\mathcal D}$$
be a functor of triangulated categories. In particular $F$ sends
distinguished triangles to distinguished triangles. Then for all objects
$M$ and $N$ we get $$M\leq_{\Delta}N\Rightarrow F(M)\leq_{\Delta}F(N).$$
\end{lem}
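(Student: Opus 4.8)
The plan is to transport the defining distinguished triangle through the functor $F$ and then to check that every piece of structure required by the definition of $\leq_\Delta$ survives. Suppose $M\leq_\Delta N$, so that there are an object $Z$ of $\mathcal C$, a nilpotent endomorphism $v$ of $Z$ and a morphism $u\colon Z\to M$ fitting into a distinguished triangle
$$Z\stackrel{v\choose u}{\rightarrow} Z\oplus M\rightarrow N\rightarrow Z[1]$$
in $\mathcal C$. Since $F$ is a functor of triangulated categories, it sends this triangle to a distinguished triangle
$$F(Z)\rightarrow F(Z\oplus M)\rightarrow F(N)\rightarrow F(Z[1])$$
in $\mathcal D$, whose leftmost morphism is the image under $F$ of ${v\choose u}$.

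The next step is to rewrite the objects and morphisms of this image triangle in the shape demanded by the definition of $\leq_\Delta$ for the pair $F(M),F(N)$. A functor of triangulated categories is in particular additive, so there is a canonical isomorphism $F(Z\oplus M)\simeq F(Z)\oplus F(M)$; composing the leftmost morphism with the two projections of this direct sum recovers $F(v)$ and $F(u)$, so that under the identification it becomes ${F(v)\choose F(u)}$. The triangulated structure of $F$ moreover supplies a natural isomorphism $F(Z[1])\simeq F(Z)[1]$, which I use to rewrite the final term of the triangle.

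It then remains only to verify the nilpotence of the endomorphism $F(v)$ of $F(Z)$, and this is immediate from functoriality: if $v^n=0$, then $F(v)^n=F(v^n)=F(0)=0$, because an additive functor sends the zero morphism to the zero morphism. Assembling these observations produces a distinguished triangle
$$F(Z)\stackrel{F(v)\choose F(u)}{\rightarrow} F(Z)\oplus F(M)\rightarrow F(N)\rightarrow F(Z)[1]$$
in $\mathcal D$ with $F(v)$ nilpotent, which is exactly a witness for $F(M)\leq_\Delta F(N)$.

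This statement carries no real obstacle; the only points deserving a word of care are the two compatibilities of $F$ with the additive and the suspension structures, so that the matrix-shaped morphism and the shifted term acquire the expected form, together with the preservation of nilpotence---all of which hold formally for any triangle functor.
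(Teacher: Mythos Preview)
Your proof is correct and follows essentially the same approach as the paper's own proof: apply $F$ to the defining triangle, use additivity and the suspension compatibility of a triangle functor to rewrite the middle and last terms, and observe that $F(v)$ inherits nilpotence from $v$. You are simply a bit more explicit about the identifications $F(Z\oplus M)\simeq F(Z)\oplus F(M)$ and $F(Z[1])\simeq F(Z)[1]$ than the paper, which takes these as understood.
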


\begin{proof} Indeed, suppose $M\leq_{\Delta}N$. Then there exists an object $Z$
such that $$Z\stackrel{v\choose u}{\rightarrow} Z\oplus M\rightarrow N\rightarrow Z[1]$$
is a distinguished triangle. We apply $F$ to this triangle, using that the hypothesis on $F$ implies that $F$ preserves finite direct sums, and using again the hypothesis on $F$, we obtain that
$$F(Z)\stackrel{F(v)\choose F(u)}{\rightarrow} F(Z)\oplus F(M)\rightarrow F(N)\rightarrow F(Z)[1]$$
is a distinguished triangle by hypothesis on $F$.
Since $v$ is assumed to be nilpotent, $F(v)$ is also nilpotent.
Therefore $F(M)\leq_{\Delta}F(N)$ as claimed.
\end{proof}

However, if ${\mathcal C}$ is a full triangulated subcategory of
$\mathcal D$,
then $M\leq_\Delta N$ in $\mathcal D$ does not necessarily imply that
$M\leq_\Delta N$ in $\mathcal C$. Indeed, the object $Z$, which is
needed for the construction does not need to lie in $\mathcal C$.

\subsection{The Yoshino-like degeneration defined by degeneration data}

Contrary to the situation for $\leq_\Delta$ we get for the
geometrically inspired degeneration that $M\leq_{cdeg}N$ does
not imply necessarily $F(M)\leq_{cdeg}F(N)$.

The degeneration $\leq_{cdeg}$ is well-behaved with respect to
a fully faithful embedding of triangulated categories.

\begin{lem}\label{cdegwithrespecttoembeddings}
Let $k$ be a commutative ring, let ${\mathcal C}_k^\circ$ be a triangulated $k$-category and let $M$ and $N$ be
objects of ${\mathcal C}_k^\circ$. Suppose that ${\mathcal D}_k^\circ$ is a
triangulated $k$-category and suppose that $F:{\mathcal D}_k^\circ\rightarrow {\mathcal C}_k^\circ$ is a full embedding of triangulated categories. Then $F(M)\leq_{cdeg}F(N)$ implies that $M\leq_{cdeg}N$.
\end{lem}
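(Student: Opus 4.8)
The plan is to transport the degeneration data witnessing $F(M)\leq_{cdeg}F(N)$ essentially verbatim along the embedding $F$, thereby producing a degeneration data for ${\mathcal D}_k^\circ$ with the \emph{same} auxiliary categories and functors. Since $F$ is a full embedding of triangulated categories, I identify ${\mathcal D}_k^\circ$ with a full triangulated subcategory of ${\mathcal C}_k^\circ$; in particular $M$ and $N$ are identified with $F(M)$ and $F(N)$, and the composite ${\mathcal D}_k^\circ\xrightarrow{F}{\mathcal C}_k^\circ\hookrightarrow{\mathcal C}_k$ is again fully faithful. First I would unfold the hypothesis via Definition~\ref{degendef}: there is a degeneration data $({\mathcal C}_k,{\mathcal C}_V,{\mathcal C}_V^\circ,\uparrow_k^V,\Phi,t)$ for ${\mathcal C}_k^\circ$ and an object $Q$ of ${\mathcal C}_V^\circ$ with $p(Q)\simeq p(F(M)\uparrow_k^V)$ in ${\mathcal C}_V^\circ[t^{-1}]$ and $\Phi(\text{cone}(t_Q))\simeq F(N)$ in ${\mathcal C}_k$.

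I would then propose, as a degeneration data for ${\mathcal D}_k^\circ$, the tuple that keeps ${\mathcal C}_k$, ${\mathcal C}_V$, ${\mathcal C}_V^\circ$, the functors $\uparrow_k^V$ and $\Phi$, and the natural transformation $t$ unchanged, while taking the required fully faithful embedding of ${\mathcal D}_k^\circ$ to be the composite displayed above. The verification that this is a legitimate degeneration data in the sense of Definition~\ref{degendatadef} is then immediate: the inclusion $({\mathcal D}_k^\circ)\uparrow_k^V\subseteq{\mathcal C}_V^\circ$ holds because, under the identification, ${\mathcal D}_k^\circ\subseteq{\mathcal C}_k^\circ$ and the original data satisfies $({\mathcal C}_k^\circ)\uparrow_k^V\subseteq{\mathcal C}_V^\circ$; and Axioms~(\ref{4}) and~(\ref{6}) are demanded only for objects of ${\mathcal D}_k^\circ$, each of which is via $F$ an object of ${\mathcal C}_k^\circ$, so they hold a fortiori since the original data satisfies them for \emph{all} objects of ${\mathcal C}_k^\circ$. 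No genuinely new computation is involved here.

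Finally I would exhibit the witness. The same object $Q$ of ${\mathcal C}_V^\circ={\mathcal D}_V^\circ$ serves: because $M\uparrow_k^V$ computed in the new data is exactly $F(M)\uparrow_k^V$, the relation $p(Q)\simeq p(M\uparrow_k^V)$ is the first available isomorphism; and $\Phi(\text{cone}(t_Q))\simeq F(N)$ in ${\mathcal C}_k$ is, under the identification of $N$ with $F(N)$, precisely the required $\Phi(\text{cone}(t_Q))\simeq N$ in ${\mathcal D}_k$, whence $M\leq_{cdeg}N$. The only point demanding real care---and the closest thing to an obstacle---is the bookkeeping: one must confirm that the two isomorphisms needed for ${\mathcal D}_k^\circ$ are literally the ones supplied for ${\mathcal C}_k^\circ$ after the identifications induced by the fully faithful composite, so that it is precisely fullness and faithfulness of $F$ (guaranteeing that $N\simeq F(N)$ transports correctly and that no morphisms are lost or created) that underpin the argument. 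Conceptually, the direction succeeds because $\leq_{cdeg}$ only requires the \emph{existence} of \emph{some} degeneration data, allowing the ambient categories to be as large as those already in hand; this is exactly what the opposite implication lacks, since axioms~(\ref{4}) and~(\ref{6}) for ${\mathcal C}_k^\circ$ would then be demanded for objects outside the image of $F$.
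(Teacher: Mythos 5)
Your proof is correct and takes essentially the same route as the paper: keep all the auxiliary categories, the functor $\Phi$, and the transformation $t$ unchanged, precompose with $F$ (the paper phrases this as replacing $\uparrow_k^V$ by $\uparrow_k^V\circ F:{\mathcal D}_k^\circ\rightarrow{\mathcal C}_V^\circ$, which is the same move as your replacement of the embedding), and reuse the same witness object $Q$. The only difference is that you spell out the verification of axioms~(\ref{4}) and~(\ref{6}) and of the two defining isomorphisms, which the paper leaves implicit.
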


\begin{proof} By definition we have a degeneration data
${\mathcal C}_k\stackrel{\uparrow_k^V}{\rightarrow} {\mathcal C}_V$ restricting to
${\mathcal C}_k^\circ\stackrel{\uparrow_k^V}{\rightarrow} {\mathcal C}_V^\circ$ and ${\mathcal C}_V^\circ\stackrel{\Phi}{\rightarrow} {\mathcal C}_k$
with an element $t:\text{id}_{{\mathcal C}_V}\rightarrow \text{id}_{{\mathcal C}_V}$ in the centre of ${\mathcal C}_V^\circ$. Moreover, we get an object $Q$ of ${\mathcal C}_V^\circ$ such that $\Phi(\text{cone}(t_{Q}))\simeq N$ in ${\mathcal C}_k^{\circ}$ and $p(Q)\simeq p(M\uparrow_k^V)$. Here
${\mathcal C}_V^\circ\stackrel{p}{\rightarrow}{\mathcal C}_V^\circ[t^{-1}]$ is the canonical functor.

But now, we may replace $\uparrow_k^V:{\mathcal C}_k^V\rightarrow {\mathcal C}_V^\circ$
by the composition $\uparrow_k^V\circ F:{\mathcal D}_k^\circ\rightarrow {\mathcal C}_V^\circ$
and obtain this way a degeneration data for ${\mathcal D}^\circ_k$, maintaining all the other data. The object $Q$ still serves for degeneration in ${\mathcal D}_k^\circ$. \end{proof}

\bigskip

Interesting is the case when Theorem~\ref{main} fully applies, combined with the above lemmas.

\begin{prop}
Let $k$ be a field and let ${\mathcal C}_k^\circ$ be the category of compact objects in an algebraic compactly generated triangulated $k$-category. If ${\mathcal D}^\circ_k$ is a full triangulated subcategory of ${\mathcal C}_k^\circ$, then for all objects
$M$ and $N$ of ${\mathcal D}_k^\circ$ we get that $M\leq_{cdeg}N$ with respect to
${\mathcal D}_k^\circ$ if and only if $M\leq_{cdeg}N$ in ${\mathcal C}_k^\circ$.
\end{prop}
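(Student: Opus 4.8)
The plan is to prove the two implications separately, assembling them from the two lemmas together with both parts of Theorem~\ref{main}, and to observe that the only genuinely delicate direction is the passage from a degeneration in ${\mathcal D}_k^\circ$ to one in ${\mathcal C}_k^\circ$: by the remark preceding this proposition, $\leq_{cdeg}$ is \emph{not} directly functorial along the inclusion, so a naive ``apply the embedding'' argument is unavailable. Throughout I write $\iota:{\mathcal D}_k^\circ\hookrightarrow{\mathcal C}_k^\circ$ for the full triangulated embedding, and I take for granted that ${\mathcal D}_k^\circ$ has split idempotents, which is already implicit in the phrase ``$M\leq_{cdeg}N$ with respect to ${\mathcal D}_k^\circ$'', since Definition~\ref{degendatadef} requires split idempotents for a degeneration data to exist at all.

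The direction ``$M\leq_{cdeg}N$ in ${\mathcal C}_k^\circ$ implies $M\leq_{cdeg}N$ with respect to ${\mathcal D}_k^\circ$'' is immediate. Applying Lemma~\ref{cdegwithrespecttoembeddings} to $F=\iota$, the hypothesis $\iota(M)\leq_{cdeg}\iota(N)$, which is exactly $M\leq_{cdeg}N$ in ${\mathcal C}_k^\circ$, yields $M\leq_{cdeg}N$ in ${\mathcal D}_k^\circ$. No further work is needed here.

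For the converse I would route the argument through $\leq_\Delta$, which, unlike $\leq_{cdeg}$, behaves well when one \emph{enlarges} the category. Starting from $M\leq_{cdeg}N$ with respect to ${\mathcal D}_k^\circ$, Theorem~\ref{main}(1) applied to ${\mathcal D}_k^\circ$ (legitimate since ${\mathcal D}_k^\circ$ has split idempotents) produces a distinguished triangle $Z\stackrel{v\choose u}{\rightarrow}Z\oplus M\rightarrow N\rightarrow Z[1]$ in ${\mathcal D}_k^\circ$ with $v$ nilpotent, i.e. $M\leq_\Delta N$ in ${\mathcal D}_k^\circ$. Because $\iota$ is a triangle functor and ${\mathcal D}_k^\circ$ is a full triangulated subcategory, this same triangle is distinguished in ${\mathcal C}_k^\circ$, with $Z\in{\mathcal D}_k^\circ\subseteq{\mathcal C}_k^\circ$; equivalently, Lemma~\ref{Deltadegenunderimage} applied to $\iota$ gives $M\leq_\Delta N$ in ${\mathcal C}_k^\circ$. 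Finally, since ${\mathcal C}_k^\circ$ is the category of compact objects of an algebraic compactly generated triangulated $k$-category and $k$ is a field, Theorem~\ref{main}(2) applies and converts $M\leq_\Delta N$ back into $M\leq_{cdeg}N$ in ${\mathcal C}_k^\circ$. Chaining these three steps closes the implication.

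The crux of the argument, and the point where the hypotheses really bite, is this last step: the detour through $\leq_\Delta$ is only reversible inside ${\mathcal C}_k^\circ$ thanks to Theorem~\ref{main}(2), which demands that the ambient category be the compact part of an algebraic compactly generated triangulated category over a field. By contrast, ${\mathcal D}_k^\circ$ enters only through Theorem~\ref{main}(1), for which split idempotents suffice. I therefore expect the main obstacle to be not any new computation but the careful verification that these standing hypotheses on ${\mathcal C}_k^\circ$ and ${\mathcal D}_k^\circ$ are exactly what is required to invoke each cited result; the triangle-level bookkeeping, namely preservation of the triangle and of the nilpotence of $v$ under $\iota$, is routine and is precisely what Lemma~\ref{Deltadegenunderimage} already records.
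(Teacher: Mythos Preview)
Your proof is correct and follows essentially the same route as the paper's: Lemma~\ref{cdegwithrespecttoembeddings} for the direction from ${\mathcal C}_k^\circ$ to ${\mathcal D}_k^\circ$, and the chain Theorem~\ref{main}(1) $\to$ Lemma~\ref{Deltadegenunderimage} $\to$ Theorem~\ref{main}(2) for the converse. Your additional remarks on where the hypotheses are actually used (split idempotents for ${\mathcal D}_k^\circ$, the full algebraic compactly generated hypothesis only for ${\mathcal C}_k^\circ$) are accurate and more explicit than the paper's own treatment.
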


\begin{proof} Suppose $M\leq_{cdeg}N$ with respect to
${\mathcal D}_k^\circ$.
Let ${\mathcal D}_k^\circ\stackrel{F}{\rightarrow}{\mathcal C}_k^\circ$ be the embedding functor. Then $M\leq_{\Delta}N$ in ${\mathcal D}_k^\circ$ by Theorem~\ref{main}, item 1.
By Lemma~\ref{Deltadegenunderimage} we obtain $FM\leq_{\Delta}FN$ in ${\mathcal C}_k^\circ$. But by Theorem~\ref{main}, item 2 we get that $FM\leq_{cdeg}FN$ with respect to ${\mathcal C}_k^\circ$.

Suppose $FM\leq_{cdeg}FN$ with respect to
${\mathcal C}_k^\circ$. Then Lemma~\ref{cdegwithrespecttoembeddings} directly gives that
$M\leq_{cdeg}N$ with respect to
${\mathcal D}_k^\circ$
\end{proof}

\section{Partial order}

A very important property of $\leq_{deg}$ is that it is a partial order
on the set of isomorphism classes of finite dimensional $A$-modules.
Yoshino showed that also $\leq_{stdvr}$ has a partial order property.
The question if $\leq_{\Delta}$ is a partial order is not easy, and
finiteness conditions are necessary. This is work due to Jensen, Su and
Zimmermann~\cite{JSZdegen}. The antisymmetricity in particular uses that
if $\mathcal T$ is an $R$-linear triangulated category for a commutative
ring $R$ such that $Hom_{\mathcal T}(X,Y)$ is of finite length as
$R$-modules for all objects $X$ and $Y$, then $M\leq_\Delta N$
implies that
$\text{length}_R(Hom_{\mathcal T}(X,M))\leq \text{length}_R(Hom_{\mathcal T}(X,N))$ for all objects $X$, and likewise for $Hom_{\mathcal T}(-,X)$.
If in addition there is $n$ such that $Hom_{\mathcal T}(M,N[n])=0$, then
$\text{length}_R(Hom_{\mathcal T}(X,M))= \text{length}_R(Hom_{\mathcal T}(X,N))$
for all $X$ implies that $M\simeq N$. The proof of this result is an
adaption of Bongartz proof in \cite{Bongartz}.

\begin{thm}(Jensen, Su, Zimmermann \cite{JSZdegen})
Let $R$ be a commutative ring and let $\mathcal T$ be an $R$-linear
skeletally small triangulated category with split idempotents
satisfying for any two objects $X,Y$ of $\mathcal T$
\begin{itemize}
\item  we get $\text{length}_R(Hom_{\mathcal T}(X,Y))<\infty$
\item  there is $n_{X,Y}\in{\mathbb Z}\setminus\{0\}$ such that
$Hom_{\mathcal T}(X,Y[n_{X,Y}])=0$
\end{itemize}
Then $\leq_\Delta$ is a partial order relation on the set of
isomorphism classes of objects in $\mathcal T$.
\end{thm}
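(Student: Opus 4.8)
The plan is to verify, for the relation $\leq_\Delta$ on the (genuine) set of isomorphism classes of objects of the skeletally small category $\mathcal T$, the three defining properties of a partial order: reflexivity, transitivity and antisymmetry. Reflexivity is immediate: for any object $M$ the distinguished triangle $0\to M\xrightarrow{\mathrm{id}} M\to 0$ already has the required shape, with $Z=0$ and the (necessarily zero, hence nilpotent) endomorphism $v=0$, so $M\leq_\Delta M$. The substance therefore lies in transitivity and in antisymmetry, and I would treat these separately, using for antisymmetry the two length statements recorded just before the theorem as black boxes.

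For transitivity, suppose $M\leq_\Delta N$ and $N\leq_\Delta P$, witnessed by distinguished triangles
$$Z_1\xrightarrow{\binom{v_1}{u_1}}Z_1\oplus M\xrightarrow{b_1}N\xrightarrow{w_1}Z_1[1]\qquad\text{and}\qquad Z_2\xrightarrow{\binom{v_2}{u_2}}Z_2\oplus N\xrightarrow{b_2}P\xrightarrow{w_2}Z_2[1]$$
with $v_1,v_2$ nilpotent. I would realise $P$ as an iterated cone: writing $Z_2\oplus N$ itself as the cone of $\binom{0}{\binom{v_1}{u_1}}\colon Z_1\to Z_2\oplus Z_1\oplus M$ and then applying the octahedral axiom to the two composable cone constructions, $P$ becomes the cone of a single morphism out of $Z_1\oplus Z_2$. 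This runs cleanly once the component $u_2\colon Z_2\to N$ is lifted through $b_1$, the obstruction to such a lift being the class $w_1u_2\in Hom_{\mathcal T}(Z_2,Z_1[1])$. When this obstruction vanishes one has $u_2=b_1\circ\binom{\xi}{\eta}$ for some $\xi\colon Z_2\to Z_1$ and $\eta\colon Z_2\to M$, and unravelling the octahedron identifies $P$ with the cone of
$$\binom{v_3}{u_3}\colon Z_1\oplus Z_2\longrightarrow (Z_1\oplus Z_2)\oplus M,\qquad v_3=\begin{pmatrix} v_1 & \xi\\ 0 & v_2\end{pmatrix},\quad u_3=(u_1,\eta).$$
Here $Z_3:=Z_1\oplus Z_2$ and $v_3$ is block upper triangular with nilpotent diagonal blocks, hence nilpotent, so that $M\leq_\Delta P$.

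The main obstacle is precisely the case $w_1u_2\neq 0$. Then $Z_3$ cannot be taken to be $Z_1\oplus Z_2$: the octahedron instead produces an object $K$ in a \emph{non-split} triangle $Z_1\to K\to Z_2\xrightarrow{w_1u_2}Z_1[1]$, and since $\mathcal T$ is Krull--Schmidt one does not have $K\oplus M\simeq Z_1\oplus Z_2\oplus M$, so the resulting triangle is not yet of the shape $Z_3\to Z_3\oplus M\to P\to Z_3[1]$. I expect to dispose of this by a dévissage exploiting the first finiteness hypothesis: because $Hom_{\mathcal T}(Z_2,Z_1[1])$ has finite length over $R$, one can induct on $\text{length}_R(w_1u_2)$, at each step modifying one of the two witnesses (enlarging $Z$ along a suitable nilpotent endomorphism, leaving the relevant cones untouched) so as to decrease $\text{length}_R(w_1u_2)$ strictly, until the unobstructed construction above applies. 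Verifying that such a modification exists and genuinely lowers the obstruction is the delicate technical point.

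Antisymmetry is then the clean part, and is where both finiteness hypotheses enter decisively. Assume $M\leq_\Delta N$ and $N\leq_\Delta M$. Feeding each degeneration into the length estimate stated before the theorem gives, for every object $X$,
$$\text{length}_R(Hom_{\mathcal T}(X,M))\leq \text{length}_R(Hom_{\mathcal T}(X,N))\leq \text{length}_R(Hom_{\mathcal T}(X,M)),$$
whence $\text{length}_R(Hom_{\mathcal T}(X,M))=\text{length}_R(Hom_{\mathcal T}(X,N))$ for all $X$; the first hypothesis ensures these lengths are finite, so the equality is meaningful. The second hypothesis now supplies an integer $n=n_{M,N}$ with $Hom_{\mathcal T}(M,N[n])=0$, which is exactly the extra input demanded by the isomorphism criterion recorded before the theorem. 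That criterion upgrades the equality of all $Hom$-lengths to an isomorphism $M\simeq N$, establishing antisymmetry and completing the proof.
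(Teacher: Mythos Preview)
The paper does not give a self-contained proof of this theorem; it records only the outline of the antisymmetry step in the paragraph preceding the statement and otherwise defers to \cite{JSZdegen} and \cite{JSZpartord}. Your treatment of reflexivity is correct, and your antisymmetry argument matches that outline exactly: you feed the two degenerations into the length inequality to force $\text{length}_R(Hom_{\mathcal T}(X,M))=\text{length}_R(Hom_{\mathcal T}(X,N))$ for all $X$, and then invoke the Bongartz-type criterion (using the vanishing $Hom_{\mathcal T}(M,N[n_{M,N}])=0$) to conclude $M\simeq N$. That part is fine.

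The genuine gap is in transitivity. Your octahedral setup and the identification of the obstruction $w_1u_2\in Hom_{\mathcal T}(Z_2,Z_1[1])$ are correct, as is the block upper-triangular construction of $v_3$ in the unobstructed case. But your proposed d\'evissage when $w_1u_2\neq 0$ is not a proof. First, ``$\text{length}_R(w_1u_2)$'' is not a well-defined induction invariant: $w_1u_2$ is an element of an $R$-module, not a module, and you do not specify what length you mean (the cyclic submodule $R\cdot w_1u_2$? the whole $Hom$ space?). Second, and more seriously, you give no mechanism by which ``enlarging $Z$ along a suitable nilpotent endomorphism, leaving the relevant cones untouched'' is even possible, let alone one that strictly decreases the obstruction: naive paddings of $Z_1$ either destroy nilpotency of the diagonal block or change the cone away from $N$. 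You yourself flag this as ``the delicate technical point'', which is to say the argument is not complete. That transitivity is genuinely the hard part here, and not a formality, is corroborated by the paper's closing remark that in the more general setting of \cite{SaorinZim} one is forced to pass to the transitive hull of $\leq_\Delta$. As it stands, your proposal establishes reflexivity and antisymmetry but leaves transitivity open.
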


As a last remark I want to mention that in \cite{SaorinZim} we
generalised this result slightly. The price we have to pay there
is that we need to consider the transitive hull of
$\leq_\Delta$.



\ifx\undefined\bysame
\newcommand{\bysame}{\leavevmode\hbox to3em{\hrulefill}\,}
\fi


\end{document}